\theoremstyle{theorem}
\newtheorem{theorem}{Theorem}
\theoremstyle{definition}
\theoremstyle{lemma}
\theoremstyle{corollary}
\theoremstyle{example}
\begin{document}

\begin{center}
{\LARGE \bf Prime-Generating Polynomial}
\vspace{6mm}

{\large \bf Madieyna Diouf}\\

e-mail: \url{mdiouf1@asu.edu}
\vspace{2mm}



\end{center}
\vspace{4mm}

\noindent
{\bf Abstract:} We present a prime-generating polynomial $(1+2n)(p -2n) + 2$ where $p>2$ is a lower member of a pair of twin primes less than $41$ and the integer $n$ is such that $\: \frac {1-p}{2} < n < p-1$. \\
{\bf Keywords:} Primes. \\
{\bf AMS Classification:} Primary 11A41.
\vspace{4mm}

\section{Introduction} 
We consecutively generate prime numbers from a low degree polynomial in which the coefficients are no more than two digits. The best-known polynomial that generates (possibly in absolute value) only primes is $n^2 +n+41$ due to Euler\cite{euler1772} which gives distinct primes for the $40$ consecutive integers $0$ to $39$. These numbers are called Euler numbers \cite{flannery}. Moreover, if  $f(n)$ is prime-generating for , $0\leq n \leq x$, then so is $f(x-n)$. Thus, the function $(n-40)^2 + (n-40) + 41$ generates primes for $80$ consecutive integers corresponding to the $40$ primes above where each is duplicated \cite{Hardy}. We observe that the polynomial $(1+2n)(p -2n) + 2 = -4n^2 + 2n(p-1) + p +2$ (possibly in absolute value) generates primes $85$ times.
\section{Initial Observations}
The decision to investigate the integers $(1+2n)(p -2n) + 2$ rises from an observation of the pair of twin primes $(p, p_ 0)$ where $p < 41.$ We obtain the relation, 
\begin {equation}
  p_ 0 = 1.p +2.
\end{equation}
We say that $p_0$ is obtained by adding $2$ to the product of the smallest positive integer (that is $1$) and the largest prime less than $p_0$ (that is $p).$ Consider the product $1.p$ from equation $(1)$, we ask what if we add $2$ to the factor $1$ and subtract $2$ from the factor $p$, take the product of the two obtained factors (after the addition) and add $2$ to the result? This leads to the following.
\begin {equation}
  p_ 1 = (1+2)(p-2) +2.
\end{equation}
Would $p_1$ be a prime? Say that we repeat the process above by adding $2$ and subtracting $2$ respectively from left and right side of the product in equation $(2)$. This gives rise to
\begin {equation}
  p_ 2 = (1+2+2)(p-2-2) +2.
\end{equation}
Would $p_2$ be a prime? If we continue the process, then we can consider the following formula.
\begin {equation}
  p_ n = (1+2n)(p-2n) +2.
\end{equation}
What are the consecutive values of $n$ for which $p_n$ is a prime?
\begin{theorem}
If $p>2$ is a lower member of a pair of twin primes less than $41$, then 
\begin{equation}
p_n =  (1+2n)(p -2n) + 2 \textup{\: \: } (possibly\:  in\:  absolute \: value)
\end{equation}
is a prime for every integer $n$ in the interval $(\frac {1-p}{2}, \: p-1).$
\end{theorem}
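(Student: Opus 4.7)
The plan is to reduce the statement to a finite, mechanical verification. The hypothesis pins $p$ down to a short list: the lower members of twin-prime pairs with $2<p<41$ are exactly $p\in\{3,5,11,17,29\}$. For each such $p$, the open interval $\bigl(\tfrac{1-p}{2},\,p-1\bigr)$ contains an explicit finite set of integers of sizes $2,\,5,\,14,\,23,\,41$, respectively, totalling the $85$ values announced in the introduction. So all that is needed is to evaluate $|p_n|$ at each of these integers and confirm it is prime.

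To shorten the table, I would first expose the symmetry of the quadratic. Writing $q=(p-1)/2$ and $m=2n-q$, completing the square gives
\[
p_n \;=\; -4n^2 + 2(p-1)n + (p+2) \;=\; (q^2+2q+3) - m^2.
\]
As $n$ ranges over the integers of the interval, $m$ ranges over integers of the same parity as $q$ (odd when $p\equiv 3\pmod 4$, even when $p\equiv 1\pmod 4$) in the symmetric window $|m|<3(p-1)/2$. Since $|p_n|$ depends only on $m^2$, the pairs $\pm m$ produce the same value, so it suffices to verify primality for roughly $3(p-1)/4$ candidates per $p$. The largest absolute value encountered is $|p_{27}|=1373$ (at $p=29$), whose primality can be certified by trial division up to $\lfloor\sqrt{1373}\rfloor=37$; every other case is smaller and equally routine.

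The only real obstacle is bookkeeping, not mathematics: one must take care that no integer $n$ in the open interval is accidentally omitted and that the parity constraint on $m$ is respected. I would also note briefly that no substantially more conceptual proof seems available. The ``lucky'' primes underlying Euler's $n^2+n+p$ via the Rabinowitsch class-number-one criterion form the list $\{2,3,5,11,17,41\}$, which matches the present collection up to the endpoints $2$ and $41$ but swaps $41$ for $29$. Since $29$ is not a Rabinowitsch prime, no uniform class-number argument covers all five cases simultaneously, so a direct case-by-case tabulation appears to be the cleanest honest proof.
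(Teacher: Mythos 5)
Your route is fundamentally the same as the paper's: the theorem has no content beyond a finite check over $p\in\{3,5,11,17,29\}$ and the $2+5+14+23+41=85$ admissible integers $n$, and both you and the author reduce to that check. What you do differently, and better, is the normalization $q=(p-1)/2$, $m=2n-q$, under which $p_n=(q^2+2q+3)-m^2$ with $m$ of fixed parity and $|m|<3q$. This is algebraically correct, it cuts the number of primality tests to the $1+3+7+12+21=44$ distinct values of $|p_n|$, and it makes the palindromic shape of the data transparent, whereas the paper simply tabulates all $85$ evaluations one by one. Your symmetric form even exposes misprints in the paper's own table: for $p=29$ the constant is $q^2+2q+3=227$, so the maximum $227$ occurs at $n=7$ (where $m=0$), not at $n=6$ as printed; the entries for $1\le n\le 10$ are each shifted by one place (the value $83$ that belongs at $n=1$ is missing, and a stray $79$ appears at $n=10$ where the true value is $191$). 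The theorem survives because the correct values happen to be prime as well, but as printed that stretch of the paper's verification does not establish what it claims, and your formula catches this instantly.

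The one genuine deficiency of your proposal is that it remains a proposal. For a statement whose entire proof \emph{is} the verification, writing ``evaluate and confirm'' proves nothing; you certify only the extremal value $1373$ explicitly. To complete the argument you must actually exhibit the $44$ distinct values of $|p_n|$ --- namely $5$; then $5,7,11$; then $11,13,29,37,43,83,131$; then $17,19,47,61,67,79,83,113,173,241,317,401$; then $29,31,83,97,127,163,173,191,211,223,227,257,349,449,557,673,797,929,1069,1217,1373$ --- and certify each by trial division up to its square root. Once that list is on the page the proof is complete; until then it is a correct plan, not a proof. The closing aside about Rabinowitsch primes is harmless but does no work in the argument.
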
 
\begin{proof}
Let $p$ be the lower member of the pair of twin primes (3, 5); thus, $p$ = 3. And 
\newline
 $ p_n = \ \mid (1+2n)(p -2n) + 2 \mid$ implies that $ p_n = \mid (1+2n)(3 -2n) + 2 \mid.$ 
\newline
And \ \ \ \ \ \ \ \ \ \ \ \  $ \frac {1-p}{2} < n < p-1$ implies that $ -1 < n < 2$.
\newline
For $n$ = 0 $ p_n =\  \mid (1+2n)(3 -2n) + 2 \mid$ implies that  $ p_n = \mid 5 \mid = 5.$ \newline
For $n$ = 1  $ p_n =\  \mid (1+2n)(3 -2n) + 2 \mid$ implies that  $ p_n = \mid 5 \mid = 5.$\newline
\newline
For the pair of twin primes (5, 7), we have $p$ = 5. And 
\newline
 $ p_n = \ \mid (1+2n)(p -2n) + 2 \mid$ implies that $ p_n = \mid (1+2n)(5 -2n) + 2 \mid$.
\newline
And \ \ \ \ \ \ \ \ \ \ \ \  $ \frac {1-p}{2} < n < p-1$  implies that $ -2 < n < 4$.
\newline
For $n$ = -1 $ p_n =\  \mid (1+2n)(5 -2n) + 2 \mid$ implies that  $ p_n = \mid -5 \mid = 5.$ \newline
For $n$ = 0 $ p_n =\  \mid (1+2n)(5 -2n) + 2 \mid$ implies that  $ p_n = \mid 7 \mid = 7.$ \newline
For $n$ = 1  $ p_n =\  \mid (1+2n)(5 -2n) + 2 \mid$ implies that  $ p_n = \mid 11 \mid = 11.$\newline 
For $n$ = 2 $ p_n =\  \mid (1+2n)(5 -2n) + 2 \mid$ implies that  $ p_n = \mid 7\mid = 7$. \newline
For $n$ = 3 $ p_n =\  \mid (1+2n)(5 -2n) + 2 \mid$ implies that  $ p_n = \mid -5 \mid = 5.$ \newline
\newline
For the pair of twin primes (11, 13), we have $p$ = 11. And 
\newline
 $ p_n = \ \mid (1+2n)(p -2n) + 2 \mid$ implies that $ p_n = \mid (1+2n)(11 -2n) + 2 \mid$.
\newline
And \ \ \ \ \ \ \ \ \ \ \ \  $ \frac {1-p}{2} < n < p-1$ implies that $ -5 < n < 10$.
\newline
For $n$ = -4 $ p_n =\  \mid (1+2n)(11 -2n) + 2 \mid$ implies that  $ p_n = \mid -131 \mid = 131.$ \newline
For $n$ = -3 $ p_n =\  \mid (1+2n)(11 -2n) + 2 \mid$ implies that  $ p_n = \mid -83 \mid = 83.$ \newline
For $n$ = -2  $ p_n =\  \mid (1+2n)(11 -2n) + 2 \mid$ implies that  $ p_n = \mid -43 \mid = 43.$\newline
For $n$ = -1 $ p_n =\  \mid (1+2n)(11 -2n) + 2 \mid$ implies that  $ p_n = \mid -11\mid = 11.$ \newline
For $n$ = 0 $ p_n =\  \mid (1+2n)(11 -2n) + 2 \mid$ implies that  $ p_n = \mid 13 \mid = 13.$ \newline
For $n$ = 1 $ p_n =\  \mid (1+2n)(11 -2n) + 2 \mid$ implies that  $ p_n = \mid 29 \mid = 29.$ \newline
For $n$ = 2  $ p_n =\  \mid (1+2n)(11 -2n) + 2 \mid$ implies that  $ p_n = \mid 37 \mid = 37.$\newline
For $n$ = 3 $ p_n =\  \mid (1+2n)(11 -2n) + 2 \mid$ implies that  $ p_n = \mid 37\mid = 37.$ \newline
For $n$ = 4 $ p_n =\  \mid (1+2n)(11 -2n) + 2 \mid$ implies that  $ p_n = \mid 29 \mid = 29.$ \newline
For $n$ = 5 $ p_n =\  \mid (1+2n)(11 -2n) + 2 \mid$ implies that  $ p_n = \mid 13 \mid = 13.$ \newline
For $n$ = 6 $ p_n =\  \mid (1+2n)(11 -2n) + 2 \mid$ implies that  $ p_n = \mid -11 \mid = 11.$ \newline
For $n$ = 7 $ p_n =\  \mid (1+2n)(11 -2n) + 2 \mid$ implies that  $ p_n = \mid -43 \mid = 43.$ \newline
For $n$ = 8  $ p_n =\  \mid (1+2n)(11 -2n) + 2 \mid$ implies that  $ p_n = \mid -83 \mid = 83.$\newline
For $n$ = 9 $ p_n =\  \mid (1+2n)(11 -2n) + 2 \mid$ implies that  $ p_n = \mid -131\mid = 131.$ \newline
\newline
For the pair of twin primes (17, 19), we have $p$ = 17. And 
\newline
 $ p_n = \ \mid (1+2n)(p -2n) + 2 \mid$ implies that $ p_n = \mid (1+2n)(17 -2n) + 2 \mid$. 
\newline
And \ \ \ \ \ \ \ \ \ \ \ \  $ \frac {1-p}{2} < n < p-1$ implies that $ -8 < n < 16$.
\newline
For $n$ = -7 $ p_n =\  \mid (1+2n)(17 -2n) + 2 \mid$ implies that  $ p_n = \mid -401 \mid = 401.$ \newline
For $n$ = -6 $ p_n =\  \mid (1+2n)(17 -2n) + 2 \mid$ implies that  $ p_n = \mid -317 \mid = 317$. \newline
For $n$ = -5  $ p_n =\  \mid (1+2n)(17 -2n) + 2 \mid$ implies that  $ p_n = \mid 241 \mid = 241.$\newline 
For $n$ = -4 $ p_n =\  \mid (1+2n)(17 -2n) + 2 \mid$ implies that  $ p_n = \mid -173\mid = 173$. \newline
For $n$ = -3 $ p_n =\  \mid (1+2n)(17 -2n) + 2 \mid$ implies that  $ p_n = \mid -113 \mid = 113$. \newline
For $n$ = -2 $ p_n =\  \mid (1+2n)(17 -2n) + 2 \mid$ implies that  $ p_n = \mid -61 \mid = 61$. \newline
For $n$ = -1  $ p_n =\  \mid (1+2n)(17 -2n) + 2 \mid$ implies that  $ p_n = \mid -17 \mid = 17$.\newline 
For $n$ = 0 $ p_n =\  \mid (1+2n)(17 -2n) + 2 \mid$ implies that  $ p_n = \mid 19\mid = 19.$ \newline
For $n$ = 1 $ p_n =\  \mid (1+2n)(17 -2n) + 2 \mid$ implies that  $ p_n = \mid 47 \mid = 47.$ \newline
For $n$ = 2 $ p_n =\  \mid (1+2n)(17 -2n) + 2 \mid$ implies that  $ p_n = \mid 67 \mid = 67$. \newline
For $n$ = 3 $ p_n =\  \mid (1+2n)(17 -2n) + 2 \mid$ implies that  $ p_n = \mid 79 \mid = 79$. \newline
For $n$ = 4 $ p_n =\  \mid (1+2n)(17 -2n) + 2 \mid$ implies that  $ p_n = \mid 83 \mid = 83.$ \newline
For $n$ = 5  $ p_n =\  \mid (1+2n)(17 -2n) + 2 \mid$ implies that  $ p_n = \mid 79\mid = 79$.\newline 
For $n$ = 6 $ p_n =\  \mid (1+2n)(17 -2n) + 2 \mid$ implies that  $ p_n = \mid 67\mid = 67$. \newline
For $n$ = 7 $ p_n =\  \mid (1+2n)(17 -2n) + 2 \mid$ implies that  $ p_n = \mid 47 \mid = 47$. \newline
For $n$ = 8 $ p_n =\  \mid (1+2n)(17 -2n) + 2 \mid$ implies that  $ p_n = \mid 19 \mid = 19.$ \newline
For $n$ = 9 $ p_n =\  \mid (1+2n)(17 -2n) + 2 \mid$ implies that  $ p_n = \mid -17 \mid = 17.$ \newline
For $n$ = 10 $ p_n =\  \mid (1+2n)(17 -2n) + 2 \mid$ implies that  $ p_n = \mid -61 \mid = 61.$ \newline
For $n$ = 11  $ p_n =\  \mid (1+2n)(17 -2n) + 2 \mid$ implies that  $ p_n = \mid -113 \mid = 113.$\newline 
For $n$ = 12 $ p_n =\  \mid (1+2n)(17 -2n) + 2 \mid$ implies that  $ p_n = \mid -173\mid = 173.$ \newline
For $n$ = 13 $ p_n =\  \mid (1+2n)(17 -2n) + 2 \mid$ implies that  $ p_n = \mid -241 \mid = 241$. \newline
For $n$ = 14 $ p_n =\  \mid (1+2n)(17 -2n) + 2 \mid$ implies that  $ p_n = \mid -317 \mid = 317.$ \newline
For $n$ = 15 $ p_n =\  \mid (1+2n)(17 -2n) + 2 \mid$ implies that  $ p_n = \mid -401 \mid = 401.$\newline 
\newline
For the pair of twin primes (29, 31), we have $p$ = 29. And 
\newline
 $ p_n = \ \mid (1+2n)(p -2n) + 2 \mid$ implies that $ p_n = \mid (1+2n)(29 -2n) + 2 \mid$.
\newline
And \ \ \ \ \ \ \ \ \ \ \ \  $ \frac {1-p}{2} < n < p-1$ implies that $ -14 < n < 28$.
\newline
For $n$ = -13 $ p_n =\  \mid (1+2n)(29 -2n) + 2 \mid$ implies that   $ p_n = \mid -1373 \mid = 1373.$ \newline
For $n$ = -12 $ p_n =\  \mid (1+2n)(29 -2n) + 2 \mid$ implies that  $ p_n = \mid -1217 \mid = 1217$. \newline
For $n$ = -11  $ p_n =\  \mid (1+2n)(29 -2n) + 2 \mid$ implies that  $ p_n = \mid -1069 \mid = 1069$.\newline 
For $n$ = -10 $ p_n =\  \mid (1+2n)(29 -2n) + 2 \mid$ implies that  $ p_n = \mid -929\mid = 929.$ \newline
For $n$ = -9 $ p_n =\  \mid (1+2n)(29 -2n) + 2 \mid$ implies that  $ p_n = \mid -797 \mid = 797$. \newline
For $n$ = -8 $ p_n =\  \mid (1+2n)(29 -2n) + 2 \mid$ implies that  $ p_n = \mid -673 \mid = 673$. \newline
For $n$ = -7  $ p_n =\  \mid (1+2n)(29 -2n) + 2 \mid$ implies that  $ p_n = \mid -557 \mid = 557$.\newline 
For $n$ = -6 $ p_n =\  \mid (1+2n)(29 -2n) + 2 \mid$ implies that  $ p_n = \mid -449\mid = 449.$ \newline
For $n$ = -5 $ p_n =\  \mid (1+2n)(29 -2n) + 2 \mid$ implies that  $ p_n = \mid -349 \mid = 349.$ \newline
For $n$ = -4 $ p_n =\  \mid (1+2n)(29 -2n) + 2 \mid$ implies that  $ p_n = \mid -257 \mid = 257.$ \newline
For $n$ = -3 $ p_n =\  \mid (1+2n)(29 -2n) + 2 \mid$ implies that  $ p_n = \mid -173 \mid = 173$. \newline
For $n$ = -2 $ p_n =\  \mid (1+2n)(29 -2n) + 2 \mid$ implies that  $ p_n = \mid -97 \mid = 97.$ \newline
For $n$ = -1  $ p_n =\  \mid (1+2n)(29 -2n) + 2 \mid$ implies that  $ p_n = \mid -29\mid = 29.$\newline 
For $n$ = 0 $ p_n =\  \mid (1+2n)(29 -2n) + 2 \mid$ implies that  $ p_n = \mid 31\mid = 31.$ \newline
For $n$ = 1 $ p_n =\  \mid (1+2n)(29 -2n) + 2 \mid$ implies that  $ p_n = \mid 127 \mid = 127.$ \newline
For $n$ = 2 $ p_n =\  \mid (1+2n)(29 -2n) + 2 \mid$ implies that  $ p_n = \mid 163\mid = 163.$ \newline
For $n$ = 3 $ p_n =\  \mid (1+2n)(29 -2n) + 2 \mid$ implies that  $ p_n = \mid 191 \mid = 191.$ \newline
For $n$ = 4  $ p_n =\  \mid (1+2n)(29 -2n) + 2 \mid$ implies that  $ p_n = \mid 211 \mid = 211.$\newline 
For $n$ = 5 $ p_n =\  \mid (1+2n)(29 -2n) + 2 \mid$ implies that  $ p_n = \mid 223\mid = 223.$ \newline
For $n$ = 6 $ p_n =\  \mid (1+2n)(29 -2n) + 2 \mid$ implies that  $ p_n = \mid 227 \mid = 227.$ \newline
For $n$ = 7 $ p_n =\  \mid (1+2n)(29 -2n) + 2 \mid$ implies that  $ p_n = \mid 223 \mid = 223$. \newline
For $n$ = 8 $ p_n =\  \mid (1+2n)(29 -2n) + 2 \mid$ implies that  $ p_n = \mid 211 \mid = 211$. \newline
For $n$ = 9 $ p_n =\  \mid (1+2n)(29 -2n) + 2 \mid$ implies that  $ p_n = \mid 191 \mid = 191.$ \newline
For $n$ = 10  $ p_n =\  \mid (1+2n)(29 -2n) + 2 \mid$ implies that  $ p_n = \mid 79\mid = 79.$\newline 
For $n$ = 11 $ p_n =\  \mid (1+2n)(29 -2n) + 2 \mid$ implies that  $ p_n = \mid 163\mid = 163.$ \newline
For $n$ = 12 $ p_n =\  \mid (1+2n)(29 -2n) + 2 \mid$ implies that  $ p_n = \mid 127 \mid = 127.$ \newline
For $n$ = 13 $ p_n =\  \mid (1+2n)(29 -2n) + 2 \mid$ implies that  $ p_n = \mid 83 \mid = 83.$ \newline
For $n$ = 14 $ p_n =\  \mid (1+2n)(29 -2n) + 2 \mid$ implies that  $ p_n = \mid 31 \mid = 31.$ \newline
For $n$ = 15 $ p_n =\  \mid (1+2n)(29 -2n) + 2 \mid$ implies that   $ p_n = \mid -29 \mid = 29.$ \newline
For $n$ = 16 $ p_n =\  \mid (1+2n)(29 -2n) + 2 \mid$ implies that  $ p_n = \mid -97 \mid = 97.$ \newline
For $n$ = 17  $ p_n =\  \mid (1+2n)(29 -2n) + 2 \mid$ implies that  $ p_n = \mid -173 \mid = 173.$\newline 
For $n$ = 18 $ p_n =\  \mid (1+2n)(29 -2n) + 2 \mid$ implies that  $ p_n = \mid -257\mid = 257.$ \newline
For $n$ = 19 $ p_n =\  \mid (1+2n)(29 -2n) + 2 \mid$ implies that  $ p_n = \mid -349 \mid = 349.$ \newline
For $n$ = 20 $ p_n =\  \mid (1+2n)(29 -2n) + 2 \mid$ implies that  $ p_n = \mid -449 \mid = 449.$ \newline
For $n$ = 21  $ p_n =\  \mid (1+2n)(29 -2n) + 2 \mid$ implies that  $ p_n = \mid -557 \mid = 557.$\newline 
For $n$ = 22 $ p_n =\  \mid (1+2n)(29 -2n) + 2 \mid$ implies that  $ p_n = \mid -673\mid = 673.$ \newline
For $n$ = 23 $ p_n =\  \mid (1+2n)(29 -2n) + 2 \mid$ implies that  $ p_n = \mid -797 \mid = 797.$ \newline
For $n$ = 24 $ p_n =\  \mid (1+2n)(29 -2n) + 2 \mid$ implies that  $ p_n = \mid -929 \mid = 929.$ \newline
For $n$ = 25 $ p_n =\  \mid (1+2n)(29 -2n) + 2 \mid$ implies that  $ p_n = \mid -1069 \mid = 1069.$ \newline
For $n$ = 26 $ p_n =\  \mid (1+2n)(29 -2n) + 2 \mid$ implies that  $ p_n = \mid -1217 \mid = 1217.$ \newline
For $n$ = 27  $ p_n =\  \mid (1+2n)(29 -2n) + 2 \mid$ implies that  $ p_n = \mid -1373\mid = 1373.$\newline
\end{proof}
For each prime $p>2$ that is a lower member of a pair of twin primes less than $41$, equation (5) generates consecutively prime numbers, $\frac {3p -5}{2}$ times. In total,
the expression has produced primes 85 times without interruption. But these primes are often repeated.

\makeatletter
\renewcommand{\@biblabel}[1]{[#1]\hfill}
\makeatother


\begin{thebibliography}{99}
\bibitem{euler1772} 
Euler, L. \emph{Nouveaux Mémoires de l'Académie royale des Sciences}. Berlin, p. 36, 1772. 
\bibitem{flannery}
 Flannery, S. and Flannery, D. In Code: \emph{A Mathematical Journey}. London: Profile Books, p. 47, 2000. 
\bibitem{Hardy}
Hardy, G. H. and Wright, E. M. \emph{An Introduction to the Theory of Numbers, 5th ed.} Oxford, England: Clarendon Press, p. 18, 1979.

\end{thebibliography}
\end{document}